\newtheorem{theorem}{\bf Theorem
}
\newtheorem{assumption}{\bf Assumption}
\newtheorem{lemma}{\bf Lemma}
\newcommand{\N}{\mathbb{N}}
\newcommand{\Z}{\mathbb{Z}}
\newcommand{\fff}{\mathcal{F}}
\renewcommand{\P}{\mathbb{P}}
\DeclareMathOperator{\E}{{\mathbb E}}
\DeclareMathOperator{\one}{{ 1\hspace*{-0.55ex}I}}
\newcommand{\cond}{\hspace*{1ex} \rule[-1ex]{0.15ex}{3ex}\hspace*{1ex}}
\newcommand{\citet}{\citeasnoun}
\begin{document}


\author{Peter Jagers\addtocounter{footnote}{-2}\thanks{Department of Mathematical Sciences, 
Chalmers University of Technology 
and University of Gothenburg,
SE-412 96 Gothenburg, Sweden.
Emails: \texttt{jagers@chalmers.se},\ \texttt{sergei.zuyev@chalmers.se}} \and \addtocounter{footnote}{-1}
Sergei Zuyev\footnotemark}

\title{Populations in environments with
  a soft carrying capacity are eventually extinct \addtocounter{footnote}{1}\thanks{To appear in J.Math.Biol.}
}

\date{\today}

\maketitle

\begin{abstract}
Consider a population whose size changes stepwise by its members reproducing
or dying (disappearing), but is otherwise quite general.  Denote the
initial (non-random) size by $Z_0$ and the size of the $n$th change by
$C_n$, $n= 1, 2, \ldots$.  Population sizes hence develop successively
as $Z_1=Z_0+C_1,\ Z_2=Z_1+C_2$ and so on, indefinitely or until there
are no further size changes, due to extinction. Extinction is thus
assumed final, so that $Z_n=0$ implies that $Z_{n+1}=0$, without there
being any other finite absorbing class of population sizes. We make no
assumptions about the time durations between the successive
changes. In the real world, or more specific models, those may be of
varying length, depending upon individual life span distributions and
their interdependencies, the age-distribution at hand and intervening
circumstances. We could consider toy models of Galton-Watson type
generation counting or of the birth-and-death type, with one
individual acting per change, until extinction, or the most general
multitype CMJ branching processes with, say, population size
dependence of reproduction. Changes may have quite varying
distributions. The basic assumption is that there is a {\em carrying
capacity}, i.e. a non-negative number $K$ such that the conditional
expectation of the change, given the complete past history, is
non-positive whenever the population exceeds the carrying capacity.
Further, to avoid unnecessary technicalities, we assume that the
change $C_n$ equals -1 (one individual dying) with a conditional
(given the past) probability uniformly bounded away from 0. It is a
simple and not very restrictive way to avoid parity phenomena, it is
related to irreducibility in Markov settings.  The straightforward,
but in contents and implications far-reaching, consequence is that all
such populations must die out. Mathematically, it follows by a
submartingale convergence property and positive probability of
reaching the absorbing extinction state.

  \medskip
\noindent  \textsc{Keywords:} population dynamics, extinction, martingales, stochastic stability

\noindent  \textsc{AMS 2010 Mathematics Subject Classification:} 92D25, 60G42, 60K40

\end{abstract}

\maketitle

\section{``All surnames tend to be lost''}

Almost a century and a half have passed since \citet{Galton} and
\citet{GW} introduced their famous simple branching process followed
by the infamous conclusion that all families (``surnames'') must die
out: ``All surnames tend to extinction [...] and this result
might have been anticipated, for a surname lost can never be
recovered.''  Since long it is textbook knowledge, that the extinction
probability of supercritical Galton-Watson (and more general)
branching processes is less than one, the alternative to extinction
being unbounded exponential growth. For a loose discussion of this
dichotomy and reflections on what circumstances that might salvage
Galton's and Watson's conclusion, see, e.g., \cite{Haccou}. Here
we prove almost sure extinction of quite general, stepwise changing
populations, which can reach any size but live in a habitat with a
carrying capacity, interpreted as a border line where reproduction
becomes sub-critical, but which may be crossed by
population size, i.e.\ a {\em soft}, carrying capacity.

Mathematically, what happens is that the population size process
becomes a super-martingale, when
crossing the carrying capacity, and extinction follows from a
combination of martingale properties. In the population dynamics
context, the result is fundamental and applies broadly, e.g., to
Markov and to general population size-dependent branching processes as
discussed by \citet{JagKle11}, provided the conditional survival
times and reproduction processes, given the past satisfy continuity
and conditional independence conditions.

The concept of a soft carrying capacity, strictly defined in
Assumption 1 beneath, is new but not unrelated to earlier discussion
in biological and mathematical population dynamics on ideas of
density dependence, see, e.g., papers by \citet{Gin:90}, \citet{Ber:91}
and \citet{NisBen:89}.

\section{Dynamics of Population Changes}

Consider a population which starts from a non-random number $Z_0$ of
individuals. These can be of various types and ages, we shall not go
into details. Changes occur successively by the death or reproduction
of the population members, and are denoted $C_n,\ n\in \N$, where $\N$
stands for the set of positive integers, and $C$ for change.  Each size
change is thus an integer valued random variable. After the first
change, there are $Z_1=Z_0+C_1$ individuals present, and generally
$Z_{n+1} = Z_n +C_{n+1}$, as long as $Z_n>0$. If $Z_n=0$, then so is
$Z_{n+1}$. The population has died out. We do not make any assumptions
about the time between changes, which in real life or more detailed
models may be quite varying and influenced by many factors, external
or internal, like the population size or the age-distribution of
individuals in the population.  Nor are there any assumptions of
customary kind about the distributions of or interdependencies between
the various $C_n$'s. Without loss of generality, we may assume that
$\P(C_n=0\cond \fff_{n-1})=0$: one can always change the indices $n$
to correspond solely to non-zero changes in the population size, of course
with the corresponding change in the conditional distributions of the
size and the time to the next event.

Simultaneous deaths of a few individuals are not excluded, but this
must not always be the case: we later assume that with positive
probability only \emph{one} individual in the population is dying at a
given index $n$. This will always be satisfied by systems where,
somewhat vaguely, individual lifespans have jointly continuous
distributions and bearings occur in a point process with a finite
intensity.

As a somewhat more precise example, satisfying our requirements,
consider a general (CMJ) branching process inspired setup, where
individuals have independent identically distributed life spans with a
continuous distribution function. Assume that during life individuals
give birth according to a point process whose intensity may be both
population-size-dependent and influenced by maternal age. A classical
type of simple processes meeting such requirements are those of
birth-and-death processes.

Another interesting case is that of a cell population, where cells
evolve in cycles, completed cycles are ended by mitotic division.  The
cycles may be dependent, but with a positive probability only one cell
divides: the size change is either -1 (if the cell dies before
completing its cycle) or +1 (if two fresh cells replace the mother
after mitosis). A ``division'' resulting in just one daughter cell,
sometimes referred to as ``asymmetrical'', would thus have to be
interpreted either as a division closely followed by the death of one
of the daughters, or just the mother cell living on, i.e.\ no change
in numbers (which however would have repercussions on the assumptions
for life span distributions).

A toy model, inspired by the Galton-Watson or Moran process, would be to let
the changes $C_{n+1}$ occur at the real time points $ n=1, 2,\ldots$ by a
(somehow chosen) individual either dying or being replaced by two or
more individuals, according to a distribution that might depend upon
the population size $Z_n$.

\section{Carrying Capacities and Extinction}
We denote the sigma-algebra of all events up to and including the
$n$-th occurrence by $\mathcal{F}_n$, and introduce a {\em carrying
  capacity} $K>0$, thought of as a large natural number. Being a
carrying capacity of the population means that the conditional
expectation of the impending change, given its past, satisfies
\begin{assumption}\label{ass:cc}
  \begin{equation}          \label{eq:cc}
    \E[C_{n+1}|\fff_n] \leq 0, \quad \mathrm{if}\ \ Z_n \geq K.
  \end{equation}
\end{assumption}
Thus, the carrying capacity, as mentioned, does not provide a
categorical barrier: population size may exceed it but then it tends
to decrease. For individual based models with a carrying capacity,
see, e.g., \cite{Fan} or \cite{JagKle11}. 

The super-martingale property \eqref{eq:cc} of the population size process is one
basic leg of our analysis, the other being the fact that each
individual, whatever the circumstances,
always runs a definite risk of death unrelated to the others. Specifically,
denoting by $\Z_+$ the set of non-negative natural numbers, we make
\begin{assumption} \label{ass:death}
  There is an $\epsilon >0$ such that
  \begin{equation}\label{eq:death}
    \P(C_{n+1}=-1|\mathcal{F}_n) \geq \epsilon\ \mathrm{for\ all}\ n\in \Z_+\,.
  \end{equation}
\end{assumption}
This is a technical assumption which in many models can be
relaxed. Its purpose is to avoid traps when the system gets into a
subset of states not containing zero without possibility to leave
it. Or the parity phenomenon when, for instance, the initial number
$Z_0$ of individuals is odd but only the changes $C_n$ by even numbers
have non-zero probabilities: obviously, such a population will never get
extinct. In a Markovian setting, the assumption guarantees that the
chain is irreducible.

Define $\nu_1$ to be the first visit of the process
below the carrying capacity,
\[\nu_1 :=\inf \{n\in \Z_+;\ Z_n<K\},\]
 Hence, if $0<Z_0<K$,  $\nu_1=0$ and $Z_{\nu_1} =Z_0$ , whereas  
\[Z_0 \geq K\Rightarrow 1\leq \nu_1\leq \infty \mbox{ and } Z_{\nu_1}
  \leq K-1,\]
provided $\nu_1<\infty$. 
\begin{lemma}
$\{Z_{n\wedge \nu_1}\}$ is a non-negative supermartingale whose
expectation is bounded by $Z_0$. 
\end{lemma}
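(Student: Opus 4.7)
The plan is a direct verification of the stopped supermartingale property, treating the trivial case $Z_0<K$ separately.

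First I would check that $\nu_1$ is an $(\fff_n)$-stopping time: since $Z_k$ is $\fff_k$-measurable, the event $\{\nu_1\le n\}=\bigcup_{k=0}^{n}\{Z_k<K\}$ lies in $\fff_n$. Consequently $\{\nu_1>n\}\in\fff_n$, which is what I will need to slide indicator functions inside the conditional expectations. Non-negativity of $Z_{n\wedge\nu_1}$ is inherited from the non-negativity of the underlying process and requires no extra argument.

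Next I would dispose of the case $Z_0<K$: then $\nu_1=0$ by definition, so $Z_{n\wedge\nu_1}=Z_0$ for every $n$, and the claim is trivial. Assume therefore $Z_0\ge K$. On the event $\{\nu_1>n\}$ we have $Z_n\ge K$, and
\[
Z_{(n+1)\wedge\nu_1}-Z_{n\wedge\nu_1}=C_{n+1}\one_{\{\nu_1>n\}}.
\]
Since $\{\nu_1>n\}\in\fff_n$, taking conditional expectations and invoking Assumption \ref{ass:cc} gives
\[
\E\bigl[Z_{(n+1)\wedge\nu_1}-Z_{n\wedge\nu_1}\bigm|\fff_n\bigr]
=\one_{\{\nu_1>n\}}\,\E[C_{n+1}\mid\fff_n]\le 0,
\]
which is the supermartingale inequality. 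On $\{\nu_1\le n\}$ the difference is identically zero, so no separate analysis is needed there.

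Finally, to get $\E[Z_{n\wedge\nu_1}]\le Z_0$ I would just iterate the above inequality (or take unconditional expectations), using that $Z_0$ is deterministic. Integrability at each step follows inductively: $Z_{n\wedge\nu_1}$ is built from $Z_0$ and the increments $C_{k+1}\one_{\{\nu_1>k\}}$ for $k<n$, and each such increment is integrable because Assumption \ref{ass:cc} presupposes the conditional expectation of $C_{k+1}$ to be well-defined on $\{Z_k\ge K\}\supseteq\{\nu_1>k\}$. The main delicacy, and the only point worth pausing on, is precisely this routine-looking integrability bookkeeping together with the care needed to distinguish what happens on $\{\nu_1>n\}$ (where the supermartingale inequality does the work) from what happens on $\{\nu_1\le n\}$ (where the stopped process is frozen); once both are in hand, the supermartingale property and the expectation bound fall out immediately.
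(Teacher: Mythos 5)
Your proposal is correct and follows essentially the same route as the paper: both verify the stopped supermartingale property by splitting on the events $\{\nu_1>n\}$ (where $Z_n\geq K$ and Assumption~\ref{ass:cc} gives $\E[C_{n+1}\mid\fff_n]\leq 0$) and $\{\nu_1\leq n\}$ (where the process is frozen), the only cosmetic difference being that you phrase the computation in terms of the increment $C_{n+1}\one_{\{\nu_1>n\}}$ while the paper conditions on $Z_{(n+1)\wedge\nu_1}$ directly. The expectation bound by $Z_0$ then follows by taking expectations exactly as you say.
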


\begin{proof}
Since $\nu_1$ is a stopping time, it holds for any $n\in \Z_+$ that
\begin{multline*}
  \E[Z_{(n+1)\wedge\nu_1}|\mathcal{F}_n] =
  \E[Z_{(n+1)\wedge\nu_1}\one_{\nu_1 \leq n} |\mathcal{F}_n]+
  \E[Z_{(n+1)\wedge\nu_1}\one_{\nu_1 > n} |\mathcal{F}_n] \\
  =\E[Z_{\nu_1}\one_{\nu_1 \leq n}|\mathcal{F}_n] +
  \E[Z_{n+1}\one_{\nu_1 > n}|\mathcal{F}_n] \leq
  Z_{\nu_1}\one_{\nu_1 \leq n} +Z_n\one_{\nu_1 > n} =Z_{n\wedge\nu_1}.
\end{multline*}
\end{proof}

Hence, the process $(Z_{n\wedge \nu_1})$ converges almost surely (and
in $L^1$). Since further $|Z_{n+1}-Z_n|\geq 1$, then on the event
$\{\nu_1=\infty\}$ the sequence $(Z_{n\wedge \nu_1}) = (Z_n)$
diverges. Thus
\begin{displaymath}
\P(\nu_1=\infty) \le \P((Z_{n\wedge \nu_1})
\text{ does not converge})=0,
\end{displaymath}
and $Z_{n\wedge \nu_1}\to Z_{\nu_1}\leq Z_0 \wedge (K-1)$ a.s.

Continue to define $\mu_1:= \inf \{n>\nu_1;\ Z_n\geq K\}\leq \infty$,
and proceed recursively to
\begin{align*}
\nu_{k+1} & := \inf \{n>\mu_k;\ Z_n<K\},\ k\in \mathbb{N}, \\
\mu_k & := \inf \{n>\nu_k;\ Z_n\geq K\},\ k \in \mathbb{N},
\end{align*}
indefinitely or until one of the $\nu_k$ is infinity.
$\mathbb{N}$, as usual, stands for the set of natural numbers. 
Clearly, extinction $\{Z_n=0\}$ must occur after the last $\nu_k<\infty$, if there
is any.
\begin{theorem}\label{th:main}
  Under the two basic assumptions \eqref{eq:cc} and \eqref{eq:death}
  made, of a carrying capacity and a definite individual death risk,
  $\P(Z_n \to 0)=1$, i.e. extinction is (almost) certain.
\end{theorem}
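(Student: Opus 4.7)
The plan is to iterate the Lemma so as to obtain a strictly increasing sequence of stopping times at which $Z<K$, all almost surely finite, and at each such time to invoke Assumption~\ref{ass:death} to extract a uniform lower bound on the conditional probability of extinction inside a short look-ahead window. Combining the two ingredients geometrically will force $\P(Z_n \not\to 0)=0$.

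For the construction I would set $\tau_0:=\nu_1$ and recursively $\tau_{j+1}:=\inf\{n\geq \tau_j+K:\ Z_n<K\}$, so that the spacing $\tau_{j+1}-\tau_j\geq K$ makes the look-ahead windows $[\tau_j+1,\tau_j+K-1]$ pairwise disjoint. Finiteness of each $\tau_j$ I would verify by induction: if $Z_{\tau_j+K}<K$ then $\tau_{j+1}=\tau_j+K$, and otherwise the shifted stopped process $(Z_{(\tau_j+K+n)\wedge \tau_{j+1}})_{n\geq 0}$ is a nonnegative supermartingale by Assumption~\ref{ass:cc}, to which the proof of the Lemma applies verbatim; its almost sure limit, combined with the unit-jump lower bound $|C_n|\geq 1$ that holds on non-extinct trajectories, forces $\tau_{j+1}<\infty$.

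Next, at each $\tau_j$ I would bound from below by $\epsilon^{K-1}$ the conditional probability that extinction occurs inside $[\tau_j+1,\tau_j+K-1]$. With $m:=Z_{\tau_j}\in\{0,1,\ldots,K-1\}$, successive conditioning on $\fff_{\tau_j},\fff_{\tau_j+1},\ldots,\fff_{\tau_j+m-1}$ (at each stage $Z$ is still positive while the run of deaths persists) and Assumption~\ref{ass:death} yield
\[
\P\bigl(C_{\tau_j+1}=\cdots=C_{\tau_j+m}=-1\mid \fff_{\tau_j}\bigr)\geq \epsilon^m \geq \epsilon^{K-1}
\]
on $\{m\geq 1\}$, and on this event extinction has occurred by time $\tau_j+m\leq \tau_j+K-1<\tau_{j+1}$. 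Since extinction is absorbing this translates to $\P(Z_{\tau_{j+1}}>0\mid\fff_{\tau_j})\leq 1-\epsilon^{K-1}$ on $\{Z_{\tau_j}>0\}$ and is trivially $0$ on $\{Z_{\tau_j}=0\}$. Taking expectations and iterating gives $\P(Z_{\tau_j}>0)\leq (1-\epsilon^{K-1})^j\to 0$, and since $\{Z_n\not\to 0\}\subseteq \bigcap_j\{Z_{\tau_j}>0\}$ the theorem follows.

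I expect the main obstacle to be Step~1, namely verifying that every $\tau_j$ is almost surely finite rather than only the first $\nu_1$ already treated in the text. The Lemma must be re-applied to excursions above $K$ that start at the arbitrary stopping times $\tau_j+K$, with arbitrary conditional starting values $\geq K$; the supermartingale argument itself only needs Assumption~\ref{ass:cc} and the stopping-time property of $\tau_{j+1}$, but some care is required to confirm that nothing in the Lemma's proof relied on the initial time $0$ or on the deterministic initial size $Z_0$ in an essential way.
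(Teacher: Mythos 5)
Your proof is correct and rests on exactly the two pillars of the paper's argument: the supermartingale property of Assumption~\ref{ass:cc} forces every excursion at or above $K$ to terminate, and Assumption~\ref{ass:death} yields the uniform bound $\epsilon^{Z_{\tau_j}}\geq\epsilon^{K-1}$ on the probability of a fatal run of deaths from any state below $K$; both proofs then combine these geometrically. The difference lies only in the bookkeeping of the successive ``trials''. The paper uses the excursion times $\nu_k$ (entries below $K$) and $\mu_k$ (returns to $\geq K$), proves $\P(\mu_k<\infty)\leq(1-\epsilon^{K-1})^k$, and identifies extinction with the event that some excursion below $K$ never ends; you instead space your trial times $\tau_j$ at least $K$ steps apart so that the death-run windows are disjoint, and bound $\P(Z_{\tau_j}>0)\leq(1-\epsilon^{K-1})^j$ directly. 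Your variant has the small advantage that the iteration conditions on the $\sigma$-algebras $\fff_{\tau_j}$ rather than on events such as $\{\mu_{k-1}<\infty\}$, which makes the chain of inequalities somewhat cleaner to justify; the paper's variant makes the probabilistic mechanism (die before ever returning above $K$) more transparent. The obstacle you flag in Step~1 is genuine but is equally present, and equally glossed over, in the paper's assertion that the process ``must return almost always below $K$'': it is resolved by noting that for each integer $z\geq K$ the process $Z_{(\tau_j+K+n)\wedge\tau_{j+1}}\one_{Z_{\tau_j+K}=z}$ is a nonnegative supermartingale bounded in mean by $z$, so the Lemma's convergence-versus-unit-jumps argument applies on each event $\{Z_{\tau_j+K}=z\}$ and hence almost surely.
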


\begin{proof}
If $\nu_k<\infty$ then so is $\mu_k$, unless the population dies out
before reaching or passing $K$. Denote $Z_{\nu_k}:=z_k$ for short. Then
\begin{displaymath}
  \P(\mu_k=\infty\cond\fff_{\nu_k})\geq \P(C_{\nu_k+1}=-1, C_{\nu_k+2}=-1,\dotsc,
  C_{\nu_k+z_k}=-1\cond \fff_{\nu_k}).
\end{displaymath}
Using \eqref{eq:death} and the tower property of conditional expectations,
\begin{multline*}
  \P(C_{\nu_k+1}=-1, C_{\nu_k+2}=-1\cond \fff_{\nu_k})=
  \E\big[\E[\one_{C_{\nu_k+1}=-1} \one_{C_{\nu_k+2}=-1}\cond
  \fff_{\nu_k+1}]\cond \fff_{\nu_k}\big]\\
  =\E\big[\one_{C_{\nu_k+1}=-1} \E[\one_{C_{\nu_k+2}=-1}\cond
  \fff_{\nu_k+1}]\cond \fff_{\nu_k}\big]\geq \epsilon \E[\one_{C_{\nu_k+1}=-1}\cond
  \fff_{\nu_k}]\geq \epsilon^2,
\end{multline*}
and so on, leading to
\begin{displaymath}
\P(\mu_{k}<\infty|\nu_{k}<\infty)\leq 1-\epsilon^{z_k} \geq p:=1-\epsilon^{K-1}
\end{displaymath}
because $z_k=Z_{\nu_k}\leq K-1$.
By the supermartingale property \eqref{eq:cc}, $(Z_n)$ must return (almost)
always below $K$ from a level equal to or above the carrying
capacity. Hence, almost surely 
\begin{displaymath}
\nu_{k+1}<\infty \Leftrightarrow \mu_{k}<\infty,\ k=1,2,\ldots
\end{displaymath}
Since the sequence $(\mu_k)$ does not decrease, it follows that 
\begin{multline*}
  \P(\mu_{k}<\infty)=\P(\mu_{k}<\infty\cond\mu_{k-1}<\infty)\,\P(\mu_{k-1}<\infty)=\\
  =\P(\mu_{k}<\infty\cond \nu_{k}<\infty)\,\P(\mu_{k-1}<\infty)\leq p\,
    \P(\mu_{k-1}<\infty)\leq \ldots \leq p^k\to 0.
  \end{multline*}

Hence,
\begin{displaymath}
\P(\exists k:\ \mu_{k}=\infty)=
  \lim_{k\to\infty}\P(\mu_{k}=\infty)=1.
\end{displaymath}

\end{proof}

Qualitatively, depending on the starting state, the population either
gets extinct quickly or evolves below and around the carrying capacity
$K$ until it eventually dies out. The population size, although
unbounded, does not get much larger than $K$. Indeed, from a
supermartingale form of Doob's maximal inequality, see, e.g.,
Cor.~2.4.6 in the book by \citet{MenPopWad:16},
\begin{displaymath}
  \P\big(\max_{n\geq 0} Z_{(\mu_{k-1}+n)\wedge \nu_k}\geq x\cond
  \fff_{\mu_{k-1}}\big)\leq \frac{K-1}{x}
\end{displaymath}
of course non-trivial only for $x\geq K$.

Although extinction is almost certain, the number of steps to it may,
however, be quite large. For instance, when $K$ is big and $Z_n$ is a
submartingale (i.e.\ $\E[C_{n+1}|\fff_n] \geq 0$) on the set $\{Z_n<K\}$,
the system of size $K-1$ needs to go a long way against a non-negative
drift to reach 0. Applying Doob's maximal inequality to the supermartingale
$X_n=K-Z_{(\nu_{k}+n)\wedge \mu_k}$ with $X_0=1$, we have that
\begin{displaymath}
  \P(Z_{\nu_k+n\wedge \mu_k}=0\cond \fff_{\nu_k})=\P\big(\max_{n\geq 0}
  X_n\geq K\cond \fff_{\nu_k}\big)\leq \frac{1}{K} 
\end{displaymath}
so it takes on average at least $K$ excursions to the domain below the
capacity to die out. In the general case we consider, nothing more can
be said: our model includes, as a particular example, the symmetric
simple random walk for which the maximal inequality is sharp. But
under additional assumptions, the average number of excursions and
time to extinction may grow exponentially in $K$ (cf.\ the exponential
lower bound on the extinction in the proof of Theorem~\ref{th:main}
above). For instance, this is the case when the increments $C_n$ are
totally bounded and the mean drift below $K$ is strictly positive:
$\E [C_{n+1}\cond\fff_n]\geq \delta$ almost surely for some $\delta>0$
and $0<Z_n<K$ , see, e.g., Th.~2.5.14 by
\citet{MenPopWad:16}. Similarly, in the presence of a strictly
negative drift above the carrying capacity,
$\E [C_{n+1}\cond\fff_n]\leq -\delta$ almost surely for $Z_n> K$, by
Th.~2.6.2 in the same book, we have that $\E [\nu_k]\leq K/\delta$ for
all $k\in\N$. If, in addition, $C_n$ are totally bounded then
according to Th.~2.5.14 there, the probability for the population to
reach size $K+x$ during an excursion above the capacity decays at
least exponentially in $x$. Qualitatively, in the presence of the
drifts uniformly separated from 0, the population size bounces around
the carrying capacity $K$ for quite a long (the time scales exponentially
with $K$) before eventually dying out.

Note also, that the uniform positivity condition in \eqref{eq:death}
is necessary for the imminent extinction: it is easy to produce
examples when $\P(C_{n+1}=-1\cond Z_n=1)$ decays with $n$ so quickly
that with positive probability the jump to 0 never happens although
the absorbing state $0$ remains attainable with positive probability.

\section*{Acknowledgement}
\label{sec:acknowledgement}
The authors are thankful to Mikhail Grinfeld for inspiring discussions
during his visit to Chalmers in the framework of the Strathclyde
University Global Engagements International Strategic Partnerships
Grant.


\end{document}